\documentclass[psamsfont,a4paper,11pt]{amsart}

\usepackage[english]{babel}   

\addtolength{\hoffset}{-2cm}         
\addtolength{\textwidth}{4cm}
\addtolength{\voffset}{-1.6cm}	
\addtolength{\textheight}{1.2cm}

\usepackage{setspace}     
\usepackage{enumerate} 	
\usepackage{afterpage} 	
\usepackage[usenames,dvipsnames]{color}
\usepackage{graphicx}	

\setlength{\tabcolsep}{18pt}

\usepackage{amsmath} 	
\usepackage{amssymb}	
\usepackage{mathrsfs} 	
\usepackage{amsfonts} 	
\usepackage{latexsym} 	
\usepackage[latin1]{inputenc} 

\usepackage{amsthm} 	
\usepackage{stackrel}       
\usepackage{amscd} 	

\usepackage{tabularx}	
\usepackage{longtable}	

\usepackage{verbatim} 
\usepackage{blindtext} 

\usepackage{amssymb,latexsym,amsmath}    
\usepackage{pstricks,multido,pst-plot}		 
\usepackage{multicol,fancyheadings}			   
\usepackage{float}                       
\usepackage[vcentermath,enableskew]{youngtab}

\usepackage{subfig}

\language 2
\allowdisplaybreaks

\hyphenation{par-ti-cu-lar}
\hyphenation{homeo-mor-phic}

\def\a{\alpha} 
 
\def\b{\beta} 
 
\def\e{\epsilon} 
 
\def\m{\mu}

\def\t{\tau} 
 
\def\p{\mathfrak{p}}

\def\pp{\mathfrak{P}}

\def\cal{\mathcal}

\def\O{{\cal O}}

\newcommand{\Z}{\mathbb{Z}} 
\newcommand{\Q}{\mathbb{Q}}

\def\Li{{\mathrm{Li}}}
\def\exp{{\mathrm{exp}}}
\def\odot{{\ddot{\mathrm{o}}}}


\newtheorem{Thm}{Theorem}[]		
\newtheorem{Lemma}[Thm]{Lemma}

\newtheorem*{thm}{Theorem}	

\newtheorem*{cor}{Corollary}

\theoremstyle{definition}

\theoremstyle{remark}
 
\newtheorem*{rmk}{Remark}

\newtheorem*{Examples}{Examples}

\newtheorem{ind}[]{{\rm\it Indice}}

\usepackage{multicol}

\usepackage{tikz}
\usetikzlibrary{arrows}

\usepackage{paralist}
\usepackage{cite}

\title{A new formula for Chebotarev Densities}

\author[Locus]{Madeline Locus Dawsey*}
\address{Department of Mathematics and Computer Science,
Emory University, Atlanta, GA 30322}
\email{madeline.locus@emory.edu}

\begin{document}

\thanks{*This author was previously known as Madeline Locus.}
\subjclass[2010]{11R45, 11R44}
\keywords{Chebotarev Density Theorem}

\begin{abstract}
We give a new formula for the Chebotarev densities of Frobenius elements in Galois groups.  This formula is given in terms of smallest prime factors $p_{\mathrm{min}}(n)$ of integers $n\geq2$.  More precisely, let $C$ be a conjugacy class of the Galois group of some finite Galois extension $K$ of $\Q$.  Then we prove that
\begin{equation*}
-\lim_{X\rightarrow\infty}\sum_{\substack{2\leq n\leq X\\[1pt]\left[\frac{K/\Q}{p_{\mathrm{min}}(n)}\right]=C}}\frac{\m(n)}{n}=\frac{\#C}{\#G}.
\end{equation*}
This theorem is a generalization of a result of Alladi from 1977 that asserts that largest prime divisors $p_{\mathrm{max}}(n)$ are equidistributed in arithmetic progressions modulo an integer $k$, which occurs when $K$ is a cyclotomic field $\Q(\zeta_k)$.
\end{abstract}

\maketitle

\section{Introduction and Statement of Results}\label{1}

It is well-known that
\begin{equation*}
\lim_{s\rightarrow1^+}\zeta(s)^{-1}=\lim\limits_{X\rightarrow\infty}\sum_{n=1}^X\frac{\m(n)}{n}=0,
\end{equation*}
where $\zeta(s)=\sum_{n\geq1}n^{-s}$ is the Riemann zeta function and $\m(n)$ is the M$\odot$bius function defined by
\begin{equation*}
\m(n):=\left\{
		\begin{array}{ll}
			1 & \mbox{if } n=1, \\
			0 & \mbox{if } p^2\mid n\text{ for some prime }p, \\
			(-1)^r & \mbox{if } n=p_1\cdots p_r\text{ where the }p_i\text{ are distinct primes}.
		\end{array}
	\right.
\end{equation*}
This follows from the fact that $\zeta(s)$ has a pole at $\mathrm{Re}(s)=1$.  In particular, we observe by reordering that
\begin{equation*}
-\lim\limits_{X\rightarrow\infty}\sum_{n=2}^X\frac{\m(n)}{n}=1.
\end{equation*}

As this work will show, this can be interpreted as the statement that 100\% of integers $n\geq2$ are divisible by a prime.  To make sense of this, let $p_{\mathrm{min}}(n)$ (resp. $p_{\mathrm{max}}(n)$) denote the smallest (resp. largest) prime divisor of $n$.  Alladi shows in \cite{A} that if $\gcd(\ell,k)=1$, then
\begin{equation}\label{Alladi}
-\sum_{\substack{n\geq2\\[1pt]p_{\mathrm{min}}(n)\equiv\ell\pmod{k}}}\frac{\m(n)}{n}=\frac{1}{\varphi(k)},
\end{equation}
which, by Dirichlet's theorem on primes in arithmetic progressions and a certain duality principle stated in Section \ref{2}, implies that \emph{largest} prime divisors are equidistributed in arithmetic progressions modulo $k$.  Here we generalize this.

For aesthetic purposes, we define
\begin{equation}
\m^*(n):=-\m(n).
\end{equation}

In order to state our results, suppose that $K$ is a Galois extension of $\Q$.  If $p$ is an unramified prime and $\p\subseteq\O_K$ is a prime ideal lying above $p$, then let $\left[\frac{K/\Q}{\p}\right]$ denote the Artin symbol (see Section \ref{2.2}).  For convenience, we let
\begin{equation*}
\left[\frac{K/\Q}{p}\right]=\left\{\left[\frac{K/\Q}{\p}\right]\,:\,\p\subseteq\O_K\text{ is a prime ideal lying above }p\right\}.
\end{equation*}
Therefore, it is well-known that $\left[\frac{K/\Q}{p}\right]=C$, where $C\subset\mathrm{Gal}(K/\Q)$ is a conjugacy class.

\begin{Thm}\label{main}
Let $K$ be a finite Galois extension of $\Q$ with Galois group $G=\mathrm{Gal}(K/\Q)$, and let $C\subset G$ be a conjugacy class.  Then we have that
\begin{equation*}
\lim_{X\rightarrow\infty}\sum_{\substack{2\leq n\leq X \\[1pt] \left[\frac{K/\Q}{p_{\mathrm{min}}(n)}\right]=C}}
\frac{\m^*(n)}{n}=\frac{\#C}{\#G}.
\end{equation*}
\end{Thm}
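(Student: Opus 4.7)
The plan is to generalize Alladi's 1977 argument by replacing Dirichlet characters on $(\Z/k\Z)^*$ with Artin characters of $G = \mathrm{Gal}(K/\Q)$, and Dirichlet's theorem on primes in arithmetic progressions with the Chebotarev density theorem.

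First, I would decompose each squarefree $n \ge 2$ as $n = pm$ where $p = p_{\mathrm{min}}(n)$ and $p_{\mathrm{min}}(m) > p$ (with the convention $p_{\mathrm{min}}(1) = \infty$), and let $S$ denote the set of primes $p$ whose Artin symbol equals $C$; by Chebotarev, $S$ has natural density $\#C/\#G$. Since $\m(pm) = -\m(m)$ when $\gcd(p,m) = 1$, one has $\m^*(n)/n = \m(m)/(pm)$, and the left-hand side of the theorem becomes
\[
\sum_{\substack{p \in S \\ p \le X}} \frac{1}{p} \sum_{\substack{m \le X/p \\ p_{\mathrm{min}}(m) > p}} \frac{\m(m)}{m}.
\]

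Second, I would pass to the Dirichlet series: for $s > 1$,
\[
F_S(s) := \sum_{\substack{n \ge 2 \\ p_{\mathrm{min}}(n) \in S}} \frac{\m^*(n)}{n^s} = \sum_{p \in S} \frac{1}{p^s}\prod_{q > p}\bigl(1 - q^{-s}\bigr) = \frac{1}{\zeta(s)}\sum_{p \in S} \frac{1}{p^s}\prod_{q \le p}\bigl(1 - q^{-s}\bigr)^{-1}.
\]
Taking $S$ to be all primes recovers the identity $\sum_{n \ge 2} \m^*(n)/n^s = 1 - \zeta(s)^{-1} \to 1$ as $s \to 1^+$. Setting $g_p(s) := (1/p^s)\prod_{q > p}(1 - q^{-s})$, each $g_p(s) \to 0$ for fixed $p$ while $\sum_p g_p(s) \to 1$, so the total mass concentrates on primes $p \to \infty$.

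Third, I would compute $\lim_{s \to 1^+} F_S(s)$ by expanding the class function $\mathbf{1}_C$ in irreducible characters of $G$,
\[
\mathbf{1}_C = \frac{\#C}{\#G} + \sum_{\chi \ne 1} c_\chi \chi,
\]
so that the trivial character contributes $(\#C/\#G)(1 - \zeta(s)^{-1}) \to \#C/\#G$. For each non-trivial irreducible $\chi$, one shows that the contribution $\sum_p \chi\bigl(\bigl[\tfrac{K/\Q}{p}\bigr]\bigr)\,g_p(s)$ vanishes as $s \to 1^+$ via the holomorphy and non-vanishing at $s = 1$ of the Artin $L$-function $L(s,\chi,K/\Q)$; equivalently, this falls out of Abel summation driven by the estimate $\pi_S(T) = (\#C/\#G)\pi(T) + o(\pi(T))$ supplied by Chebotarev. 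A Tauberian theorem then transfers the Abelian limit $\lim_{s \to 1^+} F_S(s) = \#C/\#G$ into convergence of the partial sums to the same value.

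The main obstacle is the delicate cancellation between the vanishing factor $\zeta(s)^{-1}$ and the factor $\prod_{q \le p}(1 - q^{-s})^{-1}$, which by Mertens' theorem grows like $e^\gamma \log p$ at $s = 1$. Extracting precisely the constant $\#C/\#G$ from this $0 \cdot \infty$ indeterminacy is the crux of the argument, and it is the equidistribution provided by Chebotarev's theorem that pins down the limit uniformly as $s \to 1^+$.
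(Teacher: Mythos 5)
Your plan is genuinely different from the paper's. The paper never passes to Dirichlet series or Tauberian theorems: it first proves a quantitative asymptotic, with an explicit error term, for the count of $n\le X$ with $\left[\frac{K/\Q}{p_{\mathrm{max}}(n)}\right]=C$ (Theorem \ref{lpd}), using the smooth-number function $\Psi(X,Y)$, the Dickman function, and the Lagarias--Odlyzko effective form of the Chebotarev Density Theorem. It then transfers from largest to smallest prime factors via Alladi's M\"obius duality identity $\sum_{d\mid n}\mu(d)f(p_{\mathrm{max}}(d))=-f(p_{\mathrm{min}}(n))$, together with classical zero-free-region bounds on $\sum_{n\le Y}\mu(n)/n$ and $\sum_{n\le Y}\mu(n)\log n/n$. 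That route produces the stated rate of convergence \eqref{convrate}; your Abelian argument, even if completed, would only give the bare limit.

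More seriously, the final step of your plan has a genuine gap. You cannot transfer the Abelian limit $\lim_{s\to 1^+}F_S(s)=\#C/\#G$ to convergence of the ordered partial sums $\sum_{2\le n\le X}b_n/n$, where $b_n=\mu^*(n)$ when $\left[\frac{K/\Q}{p_{\mathrm{min}}(n)}\right]=C$ and $b_n=0$ otherwise, by invoking a Tauberian theorem whose only hypothesis is $b_n=O(1)$. That is not a valid Tauberian condition for Dirichlet series: with $b_n=\cos(\log n)$ one has $\sum_{n\ge1}b_n n^{-s}=\mathrm{Re}\,\zeta(s-i)\to\mathrm{Re}\,\zeta(1-i)$ as $s\to 1^+$, yet $\sum_{n\le X}b_n/n=\mathrm{Re}\bigl((X^{i}-1)/i\bigr)+O(1)$ oscillates and has no limit. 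To repair this you would need either a one-sided hypothesis (which your $b_n$ do not satisfy) or a Newman--Ingham type theorem requiring that $F_S(s)$ extend holomorphically to $\mathrm{Re}(s)\ge1$; the latter is not clear from your factorization, because the factor $\prod_{q>p}(1-q^{-s})$ depends on $p$ and so $F_S$ is not a simple combination of Artin $L$-functions. The claimed vanishing of $\sum_p\chi\bigl(\bigl[\tfrac{K/\Q}{p}\bigr]\bigr)g_p(s)$ for nontrivial $\chi$ is also only asserted; it can likely be extracted by Abel summation against $\sum_{p\le T}\chi\bigl(\bigl[\tfrac{K/\Q}{p}\bigr]\bigr)=o(\pi(T))$, but one must control the $p$- and $s$-dependence of $g_p(s)$ uniformly as $s\to1^+$, and this is precisely the $0\cdot\infty$ delicacy you flag without resolving. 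The paper's approach through Theorem \ref{lpd}, Lemma \ref{int}, and the duality identity avoids all of these issues by working with the partial sums directly.
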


\begin{rmk}
The convergence of the sum in Theorem \ref{main} is conditional, and the proof of Theorem \ref{main} in Section \ref{3} gives an explicit convergence rate (see equation \eqref{convrate}).
\end{rmk}

\begin{rmk}
We can view the set $\left\{-\frac{\m(2)}{2},-\frac{\m(3)}{3},-\frac{\m(5)}{5},-\frac{\m(6)}{6},\dots\right\}=\left\{\frac{\m^*(2)}{2},\frac{\m^*(3)}{3},\frac{\m^*(5)}{5},\frac{\m^*(6)}{6},\dots\right\}$ as a ``signed probability measure" which can be used to calculate Chebotarev densities via minimal prime divisors of squarefree integers.
\end{rmk}

\vspace{.1cm}

\begin{Examples}
\item
\begin{enumerate}[\hspace{.3cm}a)]
\item Alladi's theorem (\ref{Alladi}) is a special case of Theorem \ref{main} where one chooses $K$ to be a cyclotomic field, i.e. $K=\Q\left(\zeta_k\right)$, where $\zeta_k$ is a primitive $k$th root of unity.\\
\item Let $f(x)=x^4+x+1$.  Then $f(x)$ has Galois group $\mathrm{Gal}(f)=S_4$, so in particular $\#\mathrm{Gal}(f)=24$.  Let $K$ be the splitting field of $f$, and define the set
\begin{equation*}
S:=\{p\text{ prime}\,:\,p\text{ is unramified in }K\text{ and }f\text{ has no roots in }\Z/p\Z\}.
\end{equation*}
For primes $p\in S$, the reduction of $f$ modulo $p$ is either an irreducible quartic, which corresponds to the conjugacy class in $\mathrm{Gal}(f)=S_4$ consisting of a 4-cycle (this conjugacy class has six elements), or a product of two irreducible quadratics, which corresponds to the conjugacy class in $S_4$ consisting of products of two transpositions (this conjugacy class has three elements).  Then the probability of an irreducible quartic contributes $\frac{6}{24}$ to the sum, and the probability of a product of irreducible quadratics contributes $\frac{3}{24}$ to the sum, so the theorem gives
\begin{equation*}
\sum_{\substack{n\geq2\\[1pt]p_{\mathrm{min}}(n)\in S}}\frac{\m^*(n)}{n}=\frac{3}{8}=0.375.
\end{equation*}
Now, define the set
\begin{equation*}
S':=\{p\text{ prime}\,:\,p\text{ is unramified in }K\text{ and }f\text{ has exactly one root in }\Z/p\Z\}.
\end{equation*}
For primes $p\in S'$, the reduction of $f$ modulo $p$ is a product of a linear factor and an irreducible cubic, which corresponds to the conjugacy class in $S_4$ consisting of a 3-cycle (this conjugacy class has eight elements).  Then the theorem gives
\begin{equation*}
\sum_{\substack{n\geq2\\[1pt]p_{\mathrm{min}}(n)\in S'}}\frac{\m^*(n)}{n}=\frac{1}{3}.
\end{equation*}
The table below gives the actual values of the sums
\begin{equation*}
\sum_{\substack{2\leq n\leq X\\[1pt]p_{\mathrm{min}}(n)\in S}}\frac{\m^*(n)}{n}\hspace{.5cm}\text{and}\hspace{.5cm}\sum_{\substack{2\leq n\leq X\\[1pt]p_{\mathrm{min}}(n)\in S'}}\frac{\m^*(n)}{n}
\end{equation*}
for increasing values of $X$.\\
\begin{center}
\begin{tabular}{ |c|c|c|c| } 
 \hline
$X$ & $f\pmod{p}$ has no roots & $f\pmod{p}$ has 1 root \\ 
 \hline
 \hline
 20,000 & 0.3730 & 0.3342\\
 \hline
 40,000 & 0.3741 & 0.3328\\
 \hline
60,000 & 0.3738 & 0.3337\\ 
 \hline
80,000 & 0.3735 & 0.3330\\ 
 \hline
100,000 & 0.3734 & 0.3338\\
 \hline
\end{tabular}
\end{center}
\vspace{.3cm}
\item Theorem \ref{main} holds for suitable sets of primes $S$ with Dirichlet density.
\end{enumerate}
\end{Examples}

\vspace{.4cm}

To prove Theorem \ref{main}, we need the following theorem which is a statement about the \emph{largest} prime divisors of integers.

\begin{Thm}\label{lpd}
Assume the notation and hypotheses from Theorem \ref{main}.  Then we have that
\begin{equation}\label{lpdeq}
\sum_{\substack{2\leq n\leq X \\[1pt] \left[\frac{K/\Q}{p_{\mathrm{max}}(n)}\right]=C}}
1=\frac{\#C}{\#G}\cdot X+O\left(X\,\mathrm{exp}\left\{-k(\log X)^{1/3}\right\}\right),
\end{equation}
where $k$ is a positive constant.
\end{Thm}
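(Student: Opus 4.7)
The approach is to stratify the integers $n \in [2, X]$ by their largest prime divisor and then feed in the effective Chebotarev density theorem. Let $\mathcal{P}_C := \{p \text{ prime} : p \text{ unramified in } K,\ [\frac{K/\Q}{p}] = C\}$, and set $\delta := \#C/\#G$. Since every $n \geq 2$ factors uniquely as $n = p \cdot m$ with $p = p_{\max}(n)$ and $m \leq X/p$ satisfying $p_{\max}(m) \leq p$ (with the convention $p_{\max}(1) := 1$), one obtains
\[
N(X) \;:=\; \sum_{\substack{2 \leq n \leq X \\ [\frac{K/\Q}{p_{\max}(n)}] = C}} 1 \;=\; \sum_{\substack{p \leq X \\ p \in \mathcal{P}_C}} \Psi(X/p,\, p) \;+\; O(1),
\]
where $\Psi(y, z) := \#\{m \leq y : p_{\max}(m) \leq z\}$ is the de~Bruijn smooth-number counting function and the $O(1)$ absorbs the finitely many ramified primes. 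Summing over all conjugacy classes collapses to the identity $\sum_{p \leq X} \Psi(X/p, p) = X - 1$, so the goal reduces to showing that the sum on the right above equals $\delta\,X + O(X\exp(-k(\log X)^{1/3}))$.

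Writing $B(t) := \#\{p \leq t : p \in \mathcal{P}_C\} - \delta\,\pi(t)$ and invoking the unconditional effective Chebotarev density theorem of Lagarias and Odlyzko, there is a constant $c = c(K) > 0$ with $|B(t)| \ll_K t\exp(-c\sqrt{\log t})$. Setting $g(t) := \Psi(X/t, t)$ and applying Abel summation to the prime sum, one finds
\[
\sum_{p \in \mathcal{P}_C,\, p \leq X} \Psi(X/p, p) \;-\; \delta\sum_{p \leq X} \Psi(X/p, p) \;=\; B(X)\,g(X) \;-\; \int_2^X B(t)\,dg(t).
\]
Since $g(X) = \Psi(1, X) = 1$, the boundary term contributes only $O(X\exp(-c\sqrt{\log X}))$, which is already stronger than what is needed.

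The real task is to estimate the Stieltjes integral $\int_2^X B(t)\,dg(t)$. I would split the range of integration at a level $t_0 = X^{1/u}$ for a parameter $u = u(X)$ to be optimized. On the upper range $t_0 \leq t \leq X$, insert the Chebotarev bound on $|B(t)|$ directly and bound the total variation of $g$ on this interval using the trivial estimate $\Psi(X/t, t) \leq X/t$; this yields a contribution of order $X^{1-1/u}\exp(-c\sqrt{\log X})$ up to logarithmic factors. On the lower range $2 \leq t \leq t_0$, use the Dickman--de~Bruijn upper bound $\Psi(X/t, t) \ll (X/t)\rho(\log(X/t)/\log t)$ together with the decay $\rho(v) \leq v^{-v(1+o(1))}$, which makes $g(t)$ itself super-polynomially small. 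Choosing $u$ so that the Chebotarev-driven error on the upper range and the Dickman-driven error on the lower range balance produces the claimed $O(X\exp(-k(\log X)^{1/3}))$ bound; the exponent $1/3$ arises as the natural balance point between the $\sqrt{\log t}$ decay in the Chebotarev error and the $u\log u$ decay rate of the Dickman function.

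The principal obstacle is precisely this balancing estimate for the integral. Because $g(t) = \Psi(X/t, t)$ depends on $t$ through both of its arguments in a genuinely two-variable manner, controlling its total variation and its pointwise size simultaneously requires a careful multi-range argument that couples smooth-number estimates to the Chebotarev error. The other ingredients---the stratification, the Chebotarev input, and the Abel summation---are routine once the right setup has been identified, so essentially all of the analytic work is concentrated in this step.
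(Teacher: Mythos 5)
Your setup---stratifying by $p_{\max}(n)$, rewriting the count as $\sum_{p\in\mathcal P_C,\,p\le X}\Psi(X/p,p)$, and bringing in the Lagarias--Odlyzko effective Chebotarev theorem---matches the paper exactly. The divergence, and the problem, is in how you estimate the error after that.

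You apply Abel summation with $g(t)=\Psi(X/t,t)$ and claim that on the range $t_0\le t\le X$ the integral $\int_{t_0}^X B(t)\,dg(t)$ is $\ll X^{1-1/u}\exp\{-c\sqrt{\log X}\}$. This bound is missing a factor of $X$. The effective Chebotarev theorem gives $|B(t)|\ll t\,\exp\{-c\sqrt{\log t}\}$, not $|B(t)|\ll \exp\{-c\sqrt{\log t}\}$, so $\sup_{[t_0,X]}|B(t)|$ is of size $X\exp\{-c\sqrt{\log t_0}\}$. Pairing this with your (correct) total-variation estimate $\mathrm{TV}(g;[t_0,X])\ll X/t_0=X^{1-1/u}$ gives a contribution of order $X^{2-1/u}\exp\{-c\sqrt{\log t_0}\}$. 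With $t_0=X^{1/u}$, this is $X\cdot\exp\big\{(1-\tfrac1u)\log X-c\sqrt{(\log X)/u}\big\}$, and the positive term $(1-1/u)\log X$ dominates the negative term unless $u-1\ll 1/\sqrt{\log X}$, i.e.\ unless $t_0$ is essentially $X$. But then the lower range $[2,t_0]$ has $\beta=\log(X/t)/\log t\le u-1\to 0$, $\rho(\beta)=1$, and the Dickman bound saves nothing. The two ranges cannot be balanced; the crude bound $\sup|B|\cdot\mathrm{TV}(g)$ is simply too lossy. You acknowledge this step as the principal obstacle, but the sketch you give for closing it does not in fact close it.

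What you are missing is the paper's Lemma \ref{tetris}. Rather than Abel-summing in $t$ and bounding $\int B\,dg$ globally, the paper writes $\Psi(X/t,t)=\sum_n\mathbf 1[p_{\max}(n)\le t\le X/n]$ and interchanges, turning the error term into
\[
\sum_{\substack{n\le X\exp\{-(\log X)^{2/3}\}\\ p_{\max}(n)\le X/n}}\left(\pi_C(X/n)-\pi_C(a_n)-\frac{\#C}{\#G}\big(\Li(X/n)-\Li(a_n)\big)\right),
\]
where $a_n=\max\big(p_{\max}(n),\exp\{(\log X)^{2/3}\}\big)$. Because the Chebotarev remainder is increasing in its argument, each summand is bounded by $O\big((X/n)\exp\{-c\sqrt{\log(X/n)}\}\big)$, and since $X/n\ge\exp\{(\log X)^{2/3}\}$ this is $O\big((X/n)\exp\{-c(\log X)^{1/3}\}\big)$; summing over $n$ costs only a factor $\log X$. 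This term-by-term application of Chebotarev at the upper endpoint $X/n$ is precisely the savings that the global $\sup|B|\cdot\mathrm{TV}(g)$ bound throws away, and it is what produces the exponent $1/3$. The small-prime range $p\le\exp\{(\log X)^{2/3}\}$ is then disposed of with $\Psi(X,Y)\ll X\exp\{-\beta\log\beta/2\}$ exactly as you envision, so that part of your sketch is sound.
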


In Section \ref{2.1}, we give some results which will help bound error terms in the proofs of the above theorems, and we give the duality between largest and smallest prime divisors introduced by Alladi \cite{A}.  In Section \ref{2.2}, we state and explain the Chebotarev Density Theorem \cite{C}.  In Section \ref{3}, we prove Theorems \ref{main} and \ref{lpd}.

\vspace{.4cm}

\section*{Acknowledgements}
The author would like to thank Krishnaswami Alladi for sharing his inspiring results and ideas, Ken Ono for his invaluable advice and guidance, and Jesse Thorner for his helpful comments.  The author would also like to thank the reviewers for taking the time to give incredibly detailed comments; among other things, the reviewers suggested a clearer proof of Lemma \ref{tetris} and a better explanation of equation \eqref{dickman}.

\vspace{.4cm}

\section{Preliminaries}\label{2}

\subsection{Error Terms and Duality}\label{2.1}

We require several tools in order to prove Theorems \ref{main} and \ref{lpd}.  First, we define the function
\begin{equation*}
\Psi(X,Y):=\sum_{\substack{n\leq X \\[1pt]p_{\mathrm{max}}(n)\leq Y}}1,
\end{equation*}
which counts the number of integers $n\leq X$ with largest prime divisor $p_{\mathrm{max}}(n)\leq Y$.  Let $\mathcal{S}(X,Y)$ denote the set of such integers $n\leq X$ with $p_{\mathrm{max}}(n)\leq Y$.  Then clearly $|\mathcal{S}(X,Y)|=\Psi(X,Y)$.

We now state a theorem of Hildebrand \cite{H} which improves an asymptotic bound for $\Psi(X,Y)$ given by de Bruijn \cite{B}.  We must first define the \emph{Dickman function} $\rho(\b)$ as the continuous solution of the system
\begin{eqnarray*}
\rho(\b)&=&1\hspace{.3 cm}\text{for }0\leq\b\leq1,\\
-\b\rho'(\b)&=&\rho(\b-1)\hspace{.3cm}\text{for }\b>1.
\end{eqnarray*}

\begin{thm}[Hildebrand]
We have that
\begin{equation*}
\Psi(X,Y)=X\rho(\b)\left(1+O_\e\left(\frac{\b\log(\b+1)}{\log X}\right)\right)
\end{equation*}
uniformly in the range $X\geq3,1\leq\b\leq\log X/(\log\log X)^{5/3+\e},$ for any fixed $\e>0$.
\end{thm}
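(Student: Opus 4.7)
The plan is to prove the estimate by induction on $\b = \log X/\log Y$, using the Buchstab functional identity
\begin{equation*}
\Psi(X,Y) = \lfloor X \rfloor - \sum_{Y < p \le X}\Psi(X/p, p)
\end{equation*}
coupled with the integral form of the Dickman equation,
\begin{equation*}
\rho(\b) = 1 - \int_1^{\b}\rho(w-1)\,\frac{dw}{w},
\end{equation*}
which follows from the defining relation $-w\rho'(w) = \rho(w-1)$ together with $\rho(1) = 1$.

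The base case $0 \le \b \le 1$ is immediate since $\Psi(X,Y) = \lfloor X\rfloor = X + O(1)$ and $\rho(\b) = 1$. For the inductive step, I would substitute the inductive hypothesis $\Psi(X/p, p) = (X/p)\rho(\b(p))(1 + \text{error})$, where $\b(p) := \log(X/p)/\log p < \b - 1$, into the Buchstab identity, then convert the resulting sum over primes to an integral via partial summation together with a strong form of the Prime Number Theorem with the Vinogradov--Korobov error term
\begin{equation*}
\pi(x) = \li(x) + O\!\left(x\exp\!\left(-c(\log x)^{3/5}(\log\log x)^{-1/5}\right)\right).
\end{equation*}
A change of variables in the resulting integral recovers $X(1 - \rho(\b))$ as its main contribution, so that after subtracting from $\lfloor X \rfloor$ one lands on the target main term $X\rho(\b)$.

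The central obstacle is controlling how the error propagates through the induction: a naive application multiplies the error by a constant at every step, which would blow up after $\b$ iterations. Following Hildebrand's strategy, one would partition the prime sum into dyadic ranges, apply the inductive hypothesis uniformly on each range, and exploit the quantitative smoothness of $\rho$ --- in particular the estimate $|\rho'(\b)/\rho(\b)| \ll \log(\b+1)$ --- to obtain a Lipschitz-type comparison which absorbs the compounding cleanly. The precise range $\b \le \log X/(\log\log X)^{5/3+\e}$ then emerges as exactly the regime in which the Vinogradov--Korobov error, propagated $\b$ times, still yields a total error of the asserted shape $O(\b\log(\b+1)/\log X)$; pushing beyond it would require finer analytic information on the zeros of $\zeta(s)$ than is presently available.
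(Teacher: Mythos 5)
The paper does not prove this statement: it is quoted verbatim as Hildebrand's theorem with a citation to \cite{H}, and is used only as a black box to derive the smooth-number bound \eqref{dickman}. So there is no internal proof to compare against; what follows assesses your sketch against Hildebrand's actual argument.

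Your sketch has a genuine gap, and it is precisely the one you flag as ``the central obstacle.'' Iterating the Buchstab identity
\begin{equation*}
\Psi(X,Y)=\lfloor X\rfloor-\sum_{Y<p\le X}\Psi(X/p,p)
\end{equation*}
is de Bruijn's method, and the error terms really do compound through the roughly $\b$ iterations needed to reach $\b\le1$; this is why that route only reaches a range of the shape $\b\le(\log X)^{c}$ with $c<1$, not the full $\b\le\log X/(\log\log X)^{5/3+\e}$. The estimate $|\rho'(\b)/\rho(\b)|\ll\log(\b+1)$ is indeed part of the story, but it does not by itself suppress the compounding; asserting that it ``absorbs the compounding cleanly'' is the step that would fail, and no amount of dyadic decomposition rescues the naive iteration. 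What Hildebrand actually does is replace Buchstab by the smoothed functional identity
\begin{equation*}
\Psi(X,Y)\log X=\int_1^{X}\Psi(t,Y)\,\frac{dt}{t}+\sum_{\substack{p\le Y\\ \nu\ge1}}\Psi\!\left(X/p^{\nu},Y\right)\log p,
\end{equation*}
obtained by summing $\log n=\sum_{p^{\nu}\mid n}\log p$ over $Y$-smooth $n\le X$ and applying partial summation on the left. The Dickman function is then compared against $\Psi$ through the matching smoothed relation $u\rho(u)=\int_{u-1}^{u}\rho(t)\,dt$, not through the raw integral form of $-\b\rho'(\b)=\rho(\b-1)$ that you wrote down. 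Because this identity relates $\Psi(X,Y)$ to an integral average of itself rather than to a sum of strictly smaller instances, the zero-free-region error from the prime sum enters in a controlled way with no geometric loss, and the exponent $(\log\log X)^{5/3+\e}$ in the admissible range comes directly from the Vinogradov--Korobov zero-free region, as you correctly anticipated. Your ingredients --- strong PNT, $\rho$-smoothness, the base case $\b\le1$ --- all appear in the real proof, but the Buchstab skeleton on which you hang them cannot deliver the claimed uniformity, and the missing idea is the replacement of Buchstab by the smoothed identity above.
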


The following unpublished theorem of Maier \cite{M} can be recognized as a corollary of Hildebrand's Theorem, and this corollary will be sufficient to prove the theorems in this paper.

\begin{cor}
If $\b=\frac{\log X}{\log Y}$, then for $X$ sufficiently large (where $\b$ varies with $X$) we have that
\begin{equation*}
\Psi(X,Y)\sim X\rho(\b)
\end{equation*}
uniformly in the range $1\leq\b\leq(\log X)^{1-\e}$ for any fixed $\e>0$.
\end{cor}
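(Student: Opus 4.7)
The plan is to derive Maier's corollary directly from Hildebrand's theorem by restricting to a slightly smaller range of $\beta$ on which the multiplicative error factor becomes $1 + o(1)$ uniformly, leaving the clean asymptotic $\Psi(X,Y) \sim X \rho(\beta)$.

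First, I would verify that, for each fixed $\epsilon > 0$, the Maier range $1 \le \beta \le (\log X)^{1-\epsilon}$ is contained in Hildebrand's range of uniformity once $X$ is large enough. Hildebrand allows $1 \le \beta \le \log X / (\log\log X)^{5/3 + \epsilon'}$ for any fixed $\epsilon' > 0$, so this containment reduces to checking $(\log\log X)^{5/3 + \epsilon'} \le (\log X)^{\epsilon}$, which holds for all sufficiently large $X$ since $\log\log X$ grows more slowly than every positive power of $\log X$.

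Next, I would bound Hildebrand's error term on the Maier range. Using $\log(\beta + 1) \le \log((\log X)^{1-\epsilon} + 1) \ll \log\log X$, one obtains the uniform estimate
\[
\frac{\beta \log(\beta + 1)}{\log X} \ll \frac{(\log X)^{1-\epsilon} \log\log X}{\log X} = \frac{\log\log X}{(\log X)^{\epsilon}},
\]
which tends to $0$ as $X \to \infty$, independently of $\beta$ in the allowed range. Substituting this into Hildebrand's formula yields $\Psi(X,Y) = X \rho(\beta)(1 + o(1))$, which is exactly the claimed asymptotic.

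I do not anticipate any substantive obstacle: the entire argument reduces to a comparison of the growth rates of $\log X$ versus $\log\log X$, together with the observation that the implied constant in Hildebrand's $O_\epsilon(\,\cdot\,)$ does not depend on $\beta$ (which is already the content of the uniformity statement). The only mildly delicate point is keeping the two instances of $\epsilon$ distinct, so that the shrinkage of the Maier range by a factor of $(\log X)^{-\epsilon}$ below $\log X$ beats the polylogarithmic denominator $(\log\log X)^{5/3 + \epsilon'}$ appearing in Hildebrand's range.
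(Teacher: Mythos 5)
Your derivation is correct, and it is exactly the argument the paper implicitly has in mind: the paper does not write out a proof of this corollary, stating only that it ``can be recognized as a corollary of Hildebrand's Theorem.'' You have supplied the two ingredients needed to justify that remark, namely the containment $(\log X)^{1-\e}\leq \log X/(\log\log X)^{5/3+\e'}$ for large $X$ (which follows because $\log\log X = o\left((\log X)^{\e}\right)$), and the uniform decay of the multiplicative error $\b\log(\b+1)/\log X\ll \log\log X/(\log X)^{\e}\to 0$ on that range. Keeping the two $\e$'s distinct, as you flag, is indeed the only slightly delicate bookkeeping point, and you handle it correctly. This is a faithful, complete proof of a step the paper asserts without detail.
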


It turns out that for $1\leq\b\leq(\log X)^{1-\e}$, we have
\begin{equation}\label{dickman}
\Psi(X,Y)=O_\e\left(X\,\mathrm{exp}\left\{-\b\log\b/2\right\}\right).
\end{equation}
The $O$-constant depends on $\e$, and we will later choose $\e=2/3$ in the proof of Theorem \ref{lpd}.  To obtain $\eqref{dickman}$, we require an upper bound for $\rho(\b)$ as recommended by one of the reviewers.  Norton \cite[Lemma 4.7]{N} gives the following bound,
\begin{equation}\label{dickman2}
\rho(\b)\leq\frac{1}{\Gamma(\b+1)}.
\end{equation}
Applying Stirling's formula to \eqref{dickman2}, we see that
\begin{equation*}
\rho(\b)\sim\frac{1}{\sqrt{2\pi\b}}\,\exp\left\{-\b\log\b/e\right\}.
\end{equation*}
From this asymptotic estimate and the above corollary, it is straightforward to see that \eqref{dickman} holds.

These estimates will be useful in bounding error terms in the proof of Theorem \ref{main}.  The following lemma will also be useful in obtaining estimates.

\begin{Lemma}\label{tetris}
For $a\leq X$ and $\mathcal{S}(X,Y)$ defined as above, we have that
\begin{equation*}
\int_a^X\left(\sum_{n\in \mathcal{S}(X/t,t)}1\right)dt=\sum_{\substack{1\leq n\leq X/a\\[1pt]p_{\mathrm{max}}(n)\leq X/n}}\int_{\max\big(p_{\mathrm{max}}(n),a\big)}^{X/n}dt.
\end{equation*}
\end{Lemma}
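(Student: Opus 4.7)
The plan is to prove this by recognizing it as an instance of Fubini's theorem: the left-hand side is a double sum/integral over pairs $(n,t)$ satisfying certain inequalities, and the right-hand side is simply the same set of pairs, counted in the opposite order.

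First, I would rewrite the left-hand side by pulling the sum through the integral. Namely, the sum $\sum_{n\in\mathcal{S}(X/t,t)}1$ is the sum over positive integers $n$ of the indicator of the event $\{n\le X/t\text{ and }p_{\max}(n)\le t\}$. Swapping the (finite) sum and the integral gives
\begin{equation*}
\int_a^X\!\left(\sum_{n\in \mathcal{S}(X/t,t)}1\right)dt
=\sum_{n\ge 1}\int_a^X \mathbf{1}\!\left[n\le X/t,\;p_{\max}(n)\le t\right]\,dt.
\end{equation*}

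Next, for each fixed $n\ge 1$, I would translate the indicator into explicit bounds on $t$. The condition $n\le X/t$ is equivalent to $t\le X/n$, and $p_{\max}(n)\le t$ is the condition $t\ge p_{\max}(n)$. Combined with the ambient constraint $a\le t\le X$, and noting that $X/n\le X$ for $n\ge 1$, the set of admissible $t$ is the interval
\begin{equation*}
\bigl[\max(p_{\max}(n),a),\,X/n\bigr],
\end{equation*}
where $p_{\max}(1)$ is interpreted to be $\le a$ so that this interval is $[a,X]$ when $n=1$. This interval is nonempty precisely when $\max(p_{\max}(n),a)\le X/n$, that is, when $n\le X/a$ \emph{and} $p_{\max}(n)\le X/n$. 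Summing over those $n$ yields exactly the right-hand side.

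The argument is essentially bookkeeping, so there is no real obstacle; the only delicate point is making sure the boundary cases (in particular $n=1$, where $p_{\max}(n)$ is conventional, and the endpoints $t=a$, $t=X/n$, $t=p_{\max}(n)$) are treated consistently so that swapping the sum and the integral is literally Fubini on a product of $\NN$ (with counting measure) and $[a,X]$ (with Lebesgue measure). Since the integrand is a nonnegative indicator, Tonelli's theorem justifies the interchange without any further hypotheses, and the proof concludes.
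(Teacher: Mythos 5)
Your proof is correct and follows essentially the same approach as the paper's: fix $n$, determine the admissible range of $t$ as the interval from $\max(p_{\max}(n),a)$ to $X/n$, and observe that this recovers the right-hand side. The paper phrases the interchange informally rather than explicitly invoking Tonelli, but the argument is the same.
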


\begin{proof}
Looking at the (Stieltjes) integral on the left hand side, a given integer $n$ occurs whenever $t$ is in the range $t\geq a$, $t\geq p_{\mathrm{max}}(n)$, and $t\leq X/n$.  Therefore the integer $n$ contributes to the integral the length of the interval from $\max\big(p_{\mathrm{max}}(n),a\big)$ to $X/n$, provided that $\max\big(p_{\mathrm{max}}(n),a\big)\leq X/n$.  This is precisely the contribution of $n$ to the sum on the right hand side.
\end{proof}

In addition to error bounds, the proof of Theorem \ref{main} requires the following beautiful result of Alladi on the duality between largest and smallest prime factors.

\begin{thm}[Alladi \cite{A}]
If $f$ is a function defined on integers with $f(1)=0$, then
\begin{eqnarray}
\label{Alladi1}\sum_{d\vert n}\m(d)f\big(p_{\mathrm{max}}(d)\big)&=&-f\big(p_{\mathrm{min}}(n)\big),\\
\sum_{d\vert n}\m(d)f\big(p_{\mathrm{min}}(d)\big)&=&-f\big(p_{\mathrm{max}}(n)\big).\label{Alladi2}
\end{eqnarray}
\end{thm}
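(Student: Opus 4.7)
The plan is to prove both identities simultaneously by a single inclusion-exclusion argument, exploiting the symmetry between $p_{\min}$ and $p_{\max}$. Since $\mu(d)$ vanishes off squarefree integers and both $p_{\min}$ and $p_{\max}$ depend only on the radical of $n$, I would first reduce to the case $n = p_1 p_2 \cdots p_r$ with $p_1 < p_2 < \cdots < p_r$. The assumption $f(1)=0$ conveniently eliminates the divisor $d=1$ so that every surviving $d$ has a well-defined largest and smallest prime factor.

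For \eqref{Alladi1}, I would partition the (squarefree) divisors $d > 1$ of $n$ according to their largest prime factor. A divisor $d$ with $p_{\max}(d) = p_j$ is exactly one of the form $d = p_j \cdot \prod_{p \in S} p$ for some $S \subseteq \{p_1, \ldots, p_{j-1}\}$, contributing $\mu(d) = (-1)^{|S|+1}$. Collecting by $j$,
\[
\sum_{d \mid n} \mu(d)\, f\bigl(p_{\max}(d)\bigr) \;=\; -\sum_{j=1}^{r} f(p_j)\sum_{k=0}^{j-1}\binom{j-1}{k}(-1)^k.
\]
The inner sum equals $(1-1)^{j-1}$, which is $1$ when $j=1$ and $0$ otherwise, so only the term $-f(p_1) = -f(p_{\min}(n))$ survives. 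For \eqref{Alladi2}, I would apply the mirror-image argument: group $d$ by its \emph{smallest} prime factor $p_j$, so that $d = p_j \cdot \prod_{p \in S} p$ with $S \subseteq \{p_{j+1}, \ldots, p_r\}$. The same binomial collapse now forces $j = r$, leaving $-f(p_r) = -f(p_{\max}(n))$.

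I do not expect any substantive obstacle; the proof is pure inclusion-exclusion wearing the clothes of a M\"obius identity. The only points requiring care are bookkeeping the sign $(-1)^{|S|+1}$ coming from the extra prime $p_j$, and invoking the hypothesis $f(1)=0$ to discard the $d=1$ term before appealing to extremal prime notation.
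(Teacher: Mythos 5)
Your proof is correct, and your inclusion--exclusion argument (reduce to the squarefree radical, group divisors by their largest or smallest prime factor, and collapse the resulting binomial sum $(1-1)^{j-1}$) is the standard and essentially the original argument. The paper itself does not reprove this identity but simply cites Alladi \cite{A}, so there is no alternate route in the paper to compare against.
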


This theorem provides the connection between Theorems \ref{main} and \ref{lpd}.

\vspace{.3cm}

\subsection{The Chebotarev Density Theorem}\label{2.2}

Our main result is closely related to the Chebotarev Density Theorem, which we carefully state here.  We must first give all of the machinery required to define the Artin symbol.  Let $L/K$ be a finite Galois extension of number fields, and let $\O_L$ and $\O_K$ be the corresponding rings of integers.  Let $\p$ be any nonzero prime (maximal) ideal in $\O_K$.  Then the ideal generated in $\O_L$ by $\p$ can be uniquely split into distinct maximal ideals $\pp_j$ lying over $\p$ in the following way: there exists an integer $g\geq1$ such that
\begin{equation*}
\p\O_L=\prod_{j=1}^g\pp_j^{e_j}.
\end{equation*}
We say the ideal $\p$ is unramified in $L$ if $e_j=1$ for all $1\leq j\leq g$, which occurs for all but finitely many prime ideals.  Define the absolute norm of a nonzero ideal $\mathfrak{a}$ of the ring of integers $\O_F$ of some number field $F$ by
\begin{equation*}
\mathrm{Nm}(\mathfrak{a}):=\left[\O_F:\mathfrak{a}\right]=\left|\O_F/\mathfrak{a}\right|.
\end{equation*}
For any prime ideal $\pp$ lying over $\p$, the Artin symbol $\left[\frac{L/K}{\pp}\right]$ is defined as the unique element $\sigma\in G$ such that
\begin{equation*}
\sigma(\a)=\a^{\mathrm{Nm}(\p)}\pmod{\pp}\text{ for all }\a\in L.
\end{equation*}
All of the prime ideals $\pp_j$ lying over $\p$ are isomorphic by elements of $G$ and $$\left[\frac{L/K}{\t(\pp)}\right]=\t\left[\frac{L/K}{\pp}\right]\t^{-1}$$ for $\t\in G$, so there exists a conjugacy class $C$ associated to $\p$ such that each $\left[\frac{L/K}{\pp_j}\right]$ lies in $C$.  We define the Artin symbol $\left[\frac{L/K}{\p}\right]$ to be the conjugacy class $C$.\\

We now define density.  Let $K$ be a number field, let $Q(K)$ be some set of prime ideals of $\O_K$, and let $P(K)$ be the set of all prime ideals of $\O_K$.  The natural density of $Q(K)$ is defined by
\begin{equation*}
\lim_{X\rightarrow\infty}\frac{\#\{\p\in Q(K):\mathrm{Nm}(\p)\leq X\}}{\#\{\p\in P(K):\mathrm{Nm}(\p)\leq X\}},
\end{equation*}
provided the limit exists.

For convenience, define
\begin{eqnarray*}
\pi_C(X,L/K)&:=&\#\{\p\in P_C:\mathrm{Nm}(\p)\leq X\}.
\end{eqnarray*}
In other words, $\pi_C(X,L/K)$ is the number of nonzero prime ideals $\p$ of $\O_K$ which are unramified in $L$ and for which $\mathrm{Nm}(\p)\leq X$ and $\left[\frac{L/K}{\p}\right]=C$, where $C$ is a conjugacy class of the Galois group $G=\mathrm{Gal}(L/K)$.

We may now recall the Chebotarev Density Theorem \cite{C}.  Let $L/K$ be a finite Galois extension of number fields, and let $C$ be a conjugacy class of $\mathrm{Gal}(L/K)$.  Let
\begin{equation*}
P_C=\left\{\p\in P(K):\p\text{ is unramified in }L,\left[\frac{L/K}{\p}\right]=C\right\}.
\end{equation*}
Then, as $X\rightarrow\infty$, we have that
\begin{equation*}
\pi_C(X,L/K)=\frac{\#C}{\#\mathrm{Gal}(L/K)}\cdot\frac{X}{\log X}+o\left(\frac{X}{\log X}\right).
\end{equation*}
In other words, the natural density of $P_C$ in $\{\p\in P(K):\p\text{ is unramified in }L\}$ exists and is equal to $\frac{\#C}{\#\mathrm{Gal}(L/K)}$.\\

In particular, a more precise formulation of the Chebotarev Density Theorem from Lagarias and Odlyzko \cite[Theorems 1.3 and 1.4]{S} is:

\begin{thm}[Lagarias--Odlyzko \cite{S}]
For sufficiently large $X\geq c_1\left(D_L,n_L\right)$, where the constant $c_1$ depends on both the discriminant $D_L$ and the degree $n_L$ of $L$, we have
\begin{equation*}
\left|\pi_C(X,L/K)-\frac{\#C}{\#G}\,\Li(X)\right|\leq2c_2\,X\,\mathrm{exp}\left\{-c_3\left(n_L\right)^{-1/2}\sqrt{\log X}\right\}
\end{equation*}
for constants $c_2$ and $c_3$.
\end{thm}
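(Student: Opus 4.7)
The plan is to carry out the standard analytic program that extends the Hadamard--de la Vall\'ee Poussin proof of the Prime Number Theorem to the Chebotarev setting, tracking every constant's dependence on the discriminant $D_L$ and the degree $n_L$.

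First I would convert the counting problem into a question about $L$-functions. By orthogonality of irreducible characters $\chi$ of $G = \mathrm{Gal}(L/K)$, one has $\mathbf{1}_{[(L/K)/\p] = C} = \frac{\#C}{\#G}\sum_\chi \overline{\chi(C)}\,\chi(\mathrm{Frob}_\p)$. This reduces $\pi_C(X, L/K)$ (modulo negligible contributions from prime powers and the finitely many ramified primes) to a linear combination, with weights $\overline{\chi(C)}\cdot\frac{\#C}{\#G}$, of the functions
\begin{equation*}
\psi(X,\chi) \;=\; \sum_{\mathrm{Nm}(\p)^m \leq X} \chi(\mathrm{Frob}_\p^m)\,\log\mathrm{Nm}(\p).
\end{equation*}

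Next I would analyze each $\psi(X,\chi)$ via the Artin $L$-function $L(s,\chi,L/K)$. By Brauer's induction theorem this $L$-function factors as a quotient of products of abelian Hecke $L$-functions attached to intermediate fields, so it inherits meromorphic continuation, a functional equation, and conductor bounds controlled by $D_L$. A standard contour-shift applied to $\frac{1}{2\pi i}\int (-L'/L)(s,\chi)\,X^s/s\,ds$ then yields an explicit formula $\psi(X,\chi) = \delta_\chi X - \sum_\rho X^\rho/\rho + O(\log(D_L X))$, where $\delta_\chi=1$ only for the trivial character and the sum runs over nontrivial zeros $\rho$. Inserting the Hadamard--de la Vall\'ee Poussin zero-free region $\sigma > 1 - c/\log(D_L(|t|+3))$ for the relevant Hecke $L$-functions, together with standard density estimates for zeros in horizontal strips, and truncating the sum over zeros at an optimal height, yields the error factor $\exp\{-c_3 n_L^{-1/2}\sqrt{\log X}\}$; the $n_L^{-1/2}$ arises from the density of zeros. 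Partial summation converts $\psi$ to $\pi$ with main term $\Li(X)$.

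The main obstacle is the possible existence of a Siegel-type exceptional real zero $\beta^{\ast}$ of some Dedekind zeta function $\zeta_{L'}$ for an intermediate field $L' \subseteq L$. Such a zero could lie genuinely closer to $s=1$ than the standard zero-free region allows, and its contribution $X^{\beta^\ast}/\beta^\ast$ would otherwise destroy the claimed bound. The remedy is the Deuring--Heilbronn repulsion phenomenon: if an exceptional zero exists very near $1$, then every other zero of the relevant $L$-functions is pushed quantitatively further from $\sigma=1$. Combining this repulsion with Stark's unconditional lower bound on $1-\beta^\ast$ in terms of $D_L$ lets one absorb the contribution of $\beta^\ast$ into the stated error, at the cost of the awkward $n_L^{-1/2}$ factor in the exponent. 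Making the repulsion quantitative and uniform in $L$ is the technical heart of the Lagarias--Odlyzko argument.
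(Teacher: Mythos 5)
The paper does not actually prove this statement: it is quoted verbatim as a known theorem (\cite{S}, Theorems 1.3 and 1.4 of Lagarias--Odlyzko), and the paper uses it as a black box in the proofs of Theorems \ref{lpd} and \ref{main}. So there is no ``paper's own proof'' to compare your write-up against.

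That said, your sketch is a faithful high-level account of what Lagarias and Odlyzko actually do: reduce $\pi_C$ via orthogonality of characters to sums over irreducible $\chi$, use Brauer induction to replace Artin $L$-functions by products and quotients of abelian Hecke $L$-functions of subfields (so one can appeal to known analytic continuation, functional equation, and conductor control in terms of $D_L$), apply the classical explicit formula via a contour shift, insert a Hadamard--de la Vall\'ee Poussin zero-free region, and isolate a possible real exceptional zero $\beta_0$ of $\zeta_L$, which is then absorbed for $X\geq c_1(D_L,n_L)$ using an unconditional lower bound on $1-\beta_0$ (Stark) together with Deuring--Heilbronn repulsion. Indeed, the version stated in the paper has the $\mathrm{Li}(X^{\beta_0})$ term of Lagarias--Odlyzko's Theorem 1.4 already folded into the error, which is exactly the absorption step you describe. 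The one place your narrative is slightly off is the source of the $n_L^{-1/2}$: it comes not so much from zero-density estimates as from the zero-free region itself, whose width for $\zeta_L$ is of the form $\sigma > 1 - c/\bigl(\log d_L + n_L\log(|t|+2)\bigr)$; optimizing the truncation height $T$ against this $n_L$-dependent width is what produces $\exp\{-c_3 n_L^{-1/2}\sqrt{\log X}\}$. Otherwise the outline is accurate, but of course one would have to defer to \cite{S} for the actual quantitative execution.
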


Note that $\Li(X):=\int_2^Xdt/\log t$.

\begin{rmk}
The constant $c_1$ can be made explicit using Theorems 1.3 and 1.4 of \cite{S}.
\end{rmk}

The above theorem is useful in bounding error terms in the proofs of Theorems \ref{main} and \ref{lpd}.

\vspace{.5cm}

\section{Proofs}\label{3}

The proofs of Theorems \ref{main} and \ref{lpd} closely follow the proofs of the corresponding theorems in \cite{A}.  Note that $c_4,\dots,c_{12}$ are positive constants which will not be specified in the following proofs.

\begin{proof}[Proof of Theorem \ref{lpd}]
We first rewrite the desired sum in terms of the function $\Psi(X,Y)$, for which we have asymptotic bounds.
\begin{eqnarray*}
\sum_{\substack{2\leq n\leq X\\[1pt]\left[\frac{K/\Q}{p_{\mathrm{max}}(n)}\right]=C}}1&=&\sum_{\substack{p\leq X\\[1pt]\left[\frac{K/\Q}{p}\right]=C}}\,\,\sum_{\substack{n\leq X\\[1pt]p_{\mathrm{max}}(n)=p}}1\\
&=&\sum_{\substack{p\leq X\\[1pt]\left[\frac{K/\Q}{p}\right]=C}}\Psi\left(\frac{X}{p},p\right).
\end{eqnarray*}
Notice that this sum can be broken up into a sum over small primes and a sum over large primes, so that
\begin{equation*}
\sum_{\substack{2\leq n\leq X\\[1pt]\left[\frac{K/\Q}{p_{\mathrm{max}}(n)}\right]=C}}1=\sum_{\substack{p\leq\mathrm{exp}\left\{(\log X)^{2/3}\right\}\\[1pt]\left[\frac{K/\Q}{p}\right]=C}}\Psi\left(\frac{X}{p},p\right)+\sum_{\substack{\mathrm{exp}\left\{(\log X)^{2/3}\right\}<p\leq X\\[1pt]\left[\frac{K/\Q}{p}\right]=C}}\Psi\left(\frac{X}{p},p\right).
\end{equation*}
Let
\begin{equation*}
S_1:=\sum_{\substack{p\leq\mathrm{exp}\left\{(\log X)^{2/3}\right\}\\[1pt]\left[\frac{K/\Q}{p}\right]=C}}\Psi\left(\frac{X}{p},p\right)\hspace{.5cm}\text{and}\hspace{.5cm}S_2:=\sum_{\substack{\mathrm{exp}\left\{(\log X)^{2/3}\right\}<p\leq X\\[1pt]\left[\frac{K/\Q}{p}\right]=C}}\Psi\left(\frac{X}{p},p\right).
\end{equation*}
We now estimate $S_1$ and show that it is much smaller than $S_2$.  This implies that $S_1$ is not the main term in the asymptotic formula in equation (\ref{lpdeq}), so we only need to obtain an upper bound.  We see that
\begin{eqnarray*}
S_1&=&\sum_{\substack{p\leq\mathrm{exp}\left\{(\log X)^{2/3}\right\}\\[1pt]\left[\frac{K/\Q}{p}\right]=C}}\Psi\left(\frac{X}{p},p\right)\\
&\leq&\sum_{p\leq\mathrm{exp}\left\{(\log X)^{2/3}\right\}}\Psi\left(\frac{X}{p},p\right).
\end{eqnarray*}
Let
\begin{equation*}
Y=\mathrm{exp}\left\{(\log X)^{2/3}\right\}.
\end{equation*}
Then we have that
\begin{equation*}
S_1\leq\Psi(X,Y)-1.
\end{equation*}
If $Y=X^{1/\b}$ for some $\b$, then $\b=(\log X)^{1/3}$.  Thus, by equation \eqref{dickman}, we have that
\begin{equation*}
S_1=O\left(X\,\mathrm{exp}\left\{-(\log X)^{1/3}\log\log X\right\}\right).
\end{equation*}
We will now estimate $S_2$, and it turns out that this will provide the main term in the asymptotic formula in equation (\ref{lpdeq}).  To obtain the main term, it will be convenient to define
\begin{equation*}
S_3:=\sum_{\substack{\mathrm{exp}\left\{(\log X)^{2/3}\right\}<p\leq X\\[1pt]\left[\frac{K/\Q}{p}\right]=C}}\Psi\left(\frac{X}{p},p\right)-\frac{\#C}{\#G}\int_{\mathrm{exp}\left\{(\log X)^{2/3}\right\}}^X\Psi\left(\frac{X}{t},t\right)\frac{dt}{\log t},
\end{equation*}
which means that
\begin{equation*}
S_2=\frac{\#C}{\#G}\int_{\mathrm{exp}\left\{(\log X)^{2/3}\right\}}^X\Psi\left(\frac{X}{t},t\right)\frac{dt}{\log t}-S_3.
\end{equation*}
Our goal now is to show that $S_3$ is small compared to $S_2$.  By the definition of $\Psi\left(\frac{X}{t},t\right)$, we replace it with a function counting elements of $\mathcal{S}\left(\frac{X}{t},t\right)$ to obtain
\begin{equation*}
S_3=\sum_{\substack{\mathrm{exp}\left\{(\log X)^{2/3}\right\}<p\leq X\\[1pt]\left[\frac{K/\Q}{p}\right]=C}}\left(\sum_{n\in \mathcal{S}(X/p,p)}1\right)-\frac{\#C}{\#G}\int_{\mathrm{exp}\left\{(\log X)^{2/3}\right\}}^X\left(\sum_{n\in \mathcal{S}(X/t,t)}1\right)\frac{dt}{\log t}.
\end{equation*}
Applying Lemma \ref{tetris} and switching the order of summation in the first term, we then have that
\begin{eqnarray*}
S_3&=&\sum_{\substack{1\leq n\leq X\,\mathrm{exp}\left\{-(\log X)^{2/3}\right\}\\[1pt]p_{\mathrm{max}}(n)\leq X/n}}\left(\sum_{\substack{p_{\mathrm{max}}(n)\leq p\leq X/n\\[1pt]p>\mathrm{exp}\left\{(\log X)^{2/3}\right\}\\[1pt]\left[\frac{K/\Q}{p}\right]=C}}1-\frac{\#C}{\#G}\int_{\max\left(p_{\mathrm{max}}(n),\mathrm{exp}\{(\log X)^{2/3}\}\right)}^{X/n}\frac{dt}{\log t}\right)\\
&=&\sum_{\substack{1\leq n\leq X\,\mathrm{exp}\left\{-(\log X)^{2/3}\right\}\\[1pt]p_{\mathrm{max}}(n)\leq X/n}}\Bigg(\pi_C\left(\frac{X}{n},K/\Q\right)-\pi_C\left(\max\left(p_{\mathrm{max}}(n),\mathrm{exp}\left\{(\log X)^{2/3}\right\}\right),K/\Q\right)\\
&&\hspace{3.5cm}-\frac{\#C}{\#G}\,\Li(X/n)+\frac{\#C}{\#G}\,\Li\left(\max\left(p_{\mathrm{max}}(n),\mathrm{exp}\left\{(\log X)^{2/3}\right\}\right)\right)\Bigg),
\end{eqnarray*}
by the definitions of $\pi_C(X,K/\Q)$ and $\Li(X)$.  Here we apply the reformulation of the Chebotarev Density Theorem by Lagarias and Odlyzko to obtain
\begin{equation*}
\left|S_3\right|\leq\sum_{\substack{1\leq n\leq X\,\mathrm{exp}\left\{-(\log X)^{2/3}\right\}\\[1pt]p_{\mathrm{max}}(n)\leq X/n}}c_4\,(X/n)\,\mathrm{exp}\left\{-c_5\sqrt{\log(X/n)}\right\}.
\end{equation*}
Since each summand satisfies
\begin{equation*}
c_4\,(X/n)\,\mathrm{exp}\left\{-c_5\sqrt{\log(X/n)}\right\}\leq c_4\,(X/n)\,\mathrm{exp}\left\{-c_6(\log X)^{1/3}\right\},
\end{equation*}
we have that
\begin{equation*}
S_3=O\left(X\,\exp\left\{-c_7(\log X)^{1/3}\right\}\right).
\end{equation*}
We have used the fact that an absolute value upper bound of the remainder term in the Chebotarev Density Theorem is an increasing function of $X$, so we have replaced the terms $p_{\mathrm{max}}(n)$ and $\mathrm{exp}\left\{(\log X)^{2/3}\right\}$ by $X/n$.  In order to get the main term of the asymptotic formula from $S_2$, we must show that the integral
\begin{equation*}
\int_{\mathrm{exp}\left\{(\log X)^{2/3}\right\}}^X\Psi\left(\frac{X}{t},t\right)\frac{dt}{\log t}
\end{equation*}
contributes a factor of $X$.  Let $[X]$ denote the integral part of $X$, and $\{X\}$ the fractional part, so that $X=[X]+\{X\}$.  We observe that
\begin{eqnarray*}
[X]-1&=&\sum_{2\leq n\leq X}1\\
&=&\sum_{p\leq X}\Psi\left(\frac{X}{p},p\right),
\end{eqnarray*}
which we again break up into small primes and large primes so that
\begin{eqnarray*}
[X]-1&=&\sum_{p\leq\exp\left\{(\log X)^{2/3}\right\}}\Psi\left(\frac{X}{p},p\right)+\sum_{\exp\left\{(\log X)^{2/3}\right\}<p\leq X}\Psi\left(\frac{X}{p},p\right).
\end{eqnarray*}
Let
\begin{equation*}
{S_1}':=\sum_{p\leq\exp\left\{(\log X)^{2/3}\right\}}\Psi\left(\frac{X}{p},p\right)\hspace{.5cm}\text{and}\hspace{.5cm}{S_2}':=\sum_{\exp\left\{(\log X)^{2/3}\right\}<p\leq X}\Psi\left(\frac{X}{p},p\right).
\end{equation*}
By similar estimates, we have that
\begin{equation*}
{S_1}'=O\left(X\,\exp\left\{-(\log X)^{1/3}(\log\log X)\right\}\right)
\end{equation*}
and
\begin{equation*}
{S_2}'=\int_{\exp\left\{(\log X)^{2/3}\right\}}^X\Psi\left(\frac{X}{t},t\right)\frac{dt}{\log t}+{S_3}',
\end{equation*}
where
\begin{equation*}
{S_3}'=O\left(X\,\exp\left\{-c_8(\log X)^{1/3}\right\}\right).
\end{equation*}
Combining these estimates gives
\begin{equation*}
\int_{\exp\left\{(\log X)^{2/3}\right\}}^X\Psi\left(\frac{X}{t},t\right)\frac{dt}{\log t}=[X]+O({S_1}'+{S_3}').
\end{equation*}
Thus we obtain the desired asymptotic formula,
\begin{eqnarray*}
\sum_{\substack{2\leq n\leq X\\[1pt]\left[\frac{K/\Q}{p_{\mathrm{max}}(n)}\right]=C}}1&=&\frac{\#C}{\#G}\cdot X+O\left(S_1+S_3+{S_1}'+{S_3}'\right)\\
&=&\frac{\#C}{\#G}\cdot X+O\left(X\,\mathrm{exp}\left\{-c_9(\log X)^{1/3}\right\}\right).
\end{eqnarray*}
\end{proof}

As a consequence of Theorem \ref{lpd}, we have the following lemma.
\begin{Lemma}\label{int}
Assume the notation and hypotheses from Theorem \ref{main}.  Then we have that
\begin{equation*}
\sum_{\substack{2\leq n\leq X \\[1pt] \left[\frac{K/\Q}{p_{\mathrm{max}}(n)}\right]=C}}
\frac{1}{n}=\frac{\#C}{\#G}\cdot\log X+O\left(\mathrm{exp}\left\{-k(\log X)^{1/3}\right\}\right)
\end{equation*}
where $k$ is a positive constant.
\end{Lemma}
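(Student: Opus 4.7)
The plan is to derive the lemma from Theorem \ref{lpd} by Abel (partial) summation. Set
\[
A(t):=\sum_{\substack{2\leq n\leq t\\[1pt] \left[\frac{K/\Q}{p_{\mathrm{max}}(n)}\right]=C}}1,
\]
so that Theorem \ref{lpd} provides the decomposition $A(t)=\tfrac{\#C}{\#G}\,t+E(t)$ with error $E(t)=O\!\left(t\,\exp\{-k(\log t)^{1/3}\}\right)$. Partial summation against $g(t)=1/t$ (with $g'(t)=-1/t^{2}$) then yields
\[
\sum_{\substack{2\leq n\leq X\\[1pt] \left[\frac{K/\Q}{p_{\mathrm{max}}(n)}\right]=C}}\frac{1}{n}
\;=\;\frac{A(X)}{X}\;+\;\int_{2}^{X}\frac{A(t)}{t^{2}}\,dt.
\]

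Feeding the asymptotic for $A$ into both pieces, the boundary term $A(X)/X$ contributes $\tfrac{\#C}{\#G}+O(\exp\{-k(\log X)^{1/3}\})$, already of the size claimed by the lemma, while the main part of the integral, $\tfrac{\#C}{\#G}\int_{2}^{X}dt/t = \tfrac{\#C}{\#G}(\log X-\log 2)$, produces the advertised $\tfrac{\#C}{\#G}\log X$ modulo an absolute constant that will be absorbed into the implied constant of the final $O$-term.

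The main technical step is bounding the leftover error integral $\int_{2}^{X}E(t)/t^{2}\,dt$. Using the pointwise bound $|E(t)|/t^{2}\ll \exp\{-k(\log t)^{1/3}\}/t$ and substituting $u=\log t$ reduces the task to estimating $\int_{\log 2}^{\log X}\exp\{-ku^{1/3}\}\,du$. A further substitution $v=ku^{1/3}$ converts the tail past $u=\log X$ into an incomplete $\Gamma$-integral, which a routine estimate bounds by $O\!\left((\log X)^{2/3}\exp\{-k(\log X)^{1/3}\}\right)$. Mildly weakening the exponent constant from $k$ to some $k'<k$ absorbs the $(\log X)^{2/3}$ prefactor as well as the $O(1)$ constants accumulated elsewhere, delivering the claimed error $O(\exp\{-k'(\log X)^{1/3}\})$. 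The main obstacle is really just this bookkeeping: once Theorem \ref{lpd} is in hand, the derivation is mechanical.
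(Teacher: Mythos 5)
Your route is the same as the paper's: feed the asymptotic from Theorem~\ref{lpd} into Abel (i.e.\ Riemann--Stieltjes) summation against $1/t$. That much is fine, and your estimate of the tail of the error integral via the substitution $v=ku^{1/3}$ is correct.

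The genuine gap is your claim that the accumulated constants --- the boundary term $\tfrac{\#C}{\#G}$ from $A(X)/X$, the $-\tfrac{\#C}{\#G}\log 2$ from the lower endpoint of the main integral, and (most importantly) the \emph{convergent} part $\int_2^\infty E(t)/t^2\,dt$ of the error integral --- ``will be absorbed into the implied constant of the final $O$-term.'' This is not possible: the lemma's error term $O\!\left(\exp\{-k(\log X)^{1/3}\}\right)$ tends to $0$ as $X\to\infty$, so no fixed nonzero constant can be absorbed into it, no matter how the implied constant or the exponent constant $k'$ is chosen. What your computation actually yields is
\[
\sum_{\substack{2\leq n\leq X\\ [\frac{K/\Q}{p_{\mathrm{max}}(n)}]=C}}\frac{1}{n}
=\frac{\#C}{\#G}\log X + c + O\!\left(\exp\{-k'(\log X)^{1/3}\}\right)
\]
for some constant $c$, and you would need a separate argument that $c=0$ (which is not true in general for sums of this type; compare $\sum_{2\le n\le X}1/n = \log X + (\gamma-1)+o(1)$).

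The paper's own proof makes exactly this decomposition and \emph{names} the constant $c_1=-e_f(1)+\int_1^\infty e_f(t)\,t^{-2}\,dt$, rather than hiding it in an $O$-term; that is the more honest bookkeeping. (One may observe that any such constant is in fact harmless for the eventual application in Theorem~\ref{main}, because in $S_6$ it gets multiplied by $\sum_{n\leq\sqrt{X}}\mu(n)/n$, which is $O(\exp\{-c_{10}(\log X)^{1/2}\})$ by \eqref{mu}.) You should either track the constant explicitly as the paper does, or state and prove the lemma in the weaker form with an additive constant and check that this suffices downstream; as written, the absorption step is a real error.
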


\begin{proof}[Proof of Lemma \ref{int}]
Define the function $f$ by
\begin{equation}\label{f}
f(n):=\left\{
		\begin{array}{ll}
			1 & \mbox{if } \left[\frac{K/\Q}{p}\right]=C,\hspace{.3cm}n=p>1, \\
			0 & \mbox{otherwise, }
		\end{array}
	\right.
\end{equation}
and set
\begin{equation*}
\psi_f(X):=\sum_{n\leq X}f\big(p_{\mathrm{max}}(n)\big).
\end{equation*}
Then $\psi_f(X)$ counts the number of integers $n\leq X$ such that $\left[\frac{K/\Q}{p_{\mathrm{max}}(n)}\right]=C$, so by Theorem \ref{lpd}  we have that
\begin{equation*}
\psi_f(X)=\frac{\#C}{\#G}\cdot X+e_f(X),
\end{equation*}
where $e_f(X)=O\left(X\,\mathrm{exp}\left\{-k(\log X)^{1/3}\right\}\right)$.  The function $\psi_f(X)$ is a type of ``stair-step function," meaning it oscillates as (the integral part of) $X$ increases depending on the values of $p_{\mathrm{max}}(n)$ for $n\leq X$.  Then we can rewrite
\begin{equation*}
\sum_{\substack{2\leq n\leq X\\[1pt]\left[\frac{K/\Q}{p_{\mathrm{max}}(n)}\right]=C}}\frac{1}{n}=\int_1^X\frac{d\psi_f(t)}{t},
\end{equation*}
which by Theorem \ref{lpd} is
\begin{eqnarray*}
\int_1^X\frac{d\psi_f(t)}{t}&=&\frac{\#C}{\#G}\int_1^X\frac{dt}{t}+\int_1^X\frac{de_f(t)}{t}\\
&=&\frac{\#C}{\#G}\cdot\log X+\frac{e_f(t)}{t}\bigg\vert_1^X+\int_1^X\frac{e_f(t)\,dt}{t^2}\\
&=&\frac{\#C}{\#G}\cdot\log X+c_1-\int_X^\infty\frac{e_f(t)\,dt}{t^2}+\frac{e_f(X)}{X},
\end{eqnarray*}
where
\begin{equation*}
c_1=\frac{-e_f(1)}{1}+\int_1^\infty\frac{e_f(t)\,dt}{t^2}.
\end{equation*}
Note that the number $c_1$ exists by Theorem \ref{lpd}, and that Lemma \ref{int} now follows because
\begin{equation*}
\frac{e_f(X)}{X}=O\left(\mathrm{exp}\left\{-k(\log X)^{1/3}\right\}\right)
\end{equation*}
where $k$ is a positive constant.
\end{proof}

\begin{proof}[Proof of Theorem \ref{main}]
Let $f$ be defined as in (\ref{f}) above.  By equation (\ref{Alladi1}) from the duality theorem of Alladi, we have
\begin{eqnarray*}
\sum_{\substack{n\leq X\\[1pt]\left[\frac{K/\Q}{p_{\mathrm{min}}(n)}\right]=C}}\frac{\m(n)}{n}&=&\sum_{n\leq X}\frac{\m(n)f(p_{\mathrm{min}}(n))}{n}\\
&=&-\sum_{n\leq X}\frac{1}{n}\sum_{d\vert n}\m\left(\frac{n}{d}\right)f(p_{\mathrm{max}}(d))\\
&=&-\sum_{n\leq X}\sum_{d\vert n}\frac{\m(n/d)}{n/d}\cdot\frac{f(p_{\mathrm{max}}(d))}{d}.
\end{eqnarray*}
To more easily obtain estimates, we delicately split the double sum into two double sums by introducing the variable $m:=n/d$.  For each such $m$, the allowed values of $d$ with $dm=n<X$ are exactly $1\leq d\leq X/m$, so we have that
\begin{eqnarray*}
&&-\sum_{n\leq X}\sum_{d\vert n}\frac{\m(n/d)}{n/d}\cdot\frac{f(p_{\mathrm{max}}(d))}{d}\\
&&\hspace{2cm}=-\sum_{1\leq m\leq\sqrt{X}}\frac{\m(m)}{m}\sum_{d\leq X/m}\frac{f(p_{\mathrm{max}}(d))}{d}-\sum_{\sqrt{X}<m\leq X}\frac{\m(m)}{m}\sum_{d\leq X/m}\frac{f(p_{\mathrm{max}}(d))}{d}.
\end{eqnarray*}
Now we change the order of summation in the second sum.  We use the fact that $m>\sqrt{X}$ and $md=n\leq X$ implies $d<\sqrt{X}$ to obtain
\begin{eqnarray*}
&&-\sum_{n\leq X}\sum_{d\vert n}\frac{\m(n/d)}{n/d}\cdot\frac{f(p_{\mathrm{max}}(d))}{d}\\
&&\hspace{2cm}=-\sum_{1\leq m\leq\sqrt{X}}\frac{\m(m)}{m}\sum_{d\leq X/m}\frac{f(p_{\mathrm{max}}(d))}{d}-\sum_{d<\sqrt{X}}\frac{f(p_{\mathrm{max}}(d))}{d}\sum_{\sqrt{X}<m\leq X/d}\frac{\m(m)}{m}.
\end{eqnarray*}
We estimate the two sums separately, reverting back to the variable $n$ instead of the new variable $m$.  Let
\begin{equation*}
S_6:=-\sum_{n\leq\sqrt{X}}\frac{\m(n)}{n}\sum_{d\leq X/n}\frac{f(p_{\mathrm{max}}(d))}{d}\hspace{.5cm}\text{and}\hspace{.5cm}S_7:=-\sum_{n<\sqrt{X}}\frac{f(p_{\mathrm{max}}(n))}{n}\sum_{\sqrt{X}<d\leq X/n}\frac{\m(d)}{d}.
\end{equation*}
We will show that $S_6$ gives the main term of the desired asymptotic formula, and we will bound $S_7$.  By Lemma \ref{int}, we have that
\begin{eqnarray*}
S_6&=&-\sum_{n\leq\sqrt{X}}\frac{\m(n)}{n}\left[\frac{\#C}{\#G}\cdot\log\left(\frac{X}{n}\right)+O\left(\mathrm{exp}\left\{-k(\log(X/n))^{1/3}\right\}\right)\right]\\
&=&-\left(\frac{\#C}{\#G}\cdot\log X\right)\sum_{n\leq\sqrt{X}}\frac{\m(n)}{n}+\frac{\#C}{\#G}\sum_{1\leq n\leq\sqrt{X}}\frac{\m(n)\,\log n}{n}+O\left(\mathrm{exp}\left\{-k(\log X)^{1/3}\right\}\right).
\end{eqnarray*}
We now apply well-known bounds for
\begin{equation*}
\sum_{n\leq\sqrt{X}}\frac{\m(n)}{n}\hspace{.5cm}\text{and}\hspace{.5cm}\sum_{1\leq n\leq\sqrt{X}}\frac{\m(n)\,\log n}{n}
\end{equation*}
which are consequences of the standard zero-free region for $\zeta(s)$ (for example, see \cite[Chapter 13]{D}).  Namely, we have that
\begin{equation}\label{mu}
\sum_{n\leq\sqrt{X}}\frac{\m(n)}{n}=O\left(\exp\left\{-c_{10}(\log X)^{1/2}\right\}\right)
\end{equation}
and
\begin{equation*}
\sum_{1\leq n\leq\sqrt{X}}\frac{\m(n)\,\log n}{n}=-1+O\left(\exp\left\{-c_{11}(\log X)^{1/2}\right\}\right).
\end{equation*}
Therefore, we have that
\begin{eqnarray*}
S_6&=&-\frac{\#C}{\#G}+O\left(\exp\left\{-c_{10}(\log X)^{1/2}\right\}\right)+O\left(\exp\left\{-c_{11}(\log X)^{1/2}\right\}\right)\\
&&\hspace{1.5cm}+O\left(\mathrm{exp}\left\{-k(\log X)^{1/3}\right\}\right)\\
&=&-\frac{\#C}{\#G}+O\left(\mathrm{exp}\left\{-k(\log X)^{1/3}\right\}\right).
\end{eqnarray*}
By equation (\ref{mu}), we also have that
\begin{eqnarray*}
S_7&=&O\left(\sum_{n\leq\sqrt{X}}\frac{1}{n}\exp\left\{-c_{10}(\log(X/n))^{1/2}\right\}\right)\\
&=&O\left(\exp\left\{-c_{12}(\log X)^{1/2}\right\}\right).
\end{eqnarray*}
Now we see that
\begin{equation*}
\sum_{\substack{2\leq n\leq X\\[1pt]\left[\frac{K/\Q}{p_{\mathrm{min}}(n)}\right]=C}}\frac{\m(n)}{n}=-\frac{\#C}{\#G}+S_6+S_7,
\end{equation*}
and therefore we have that
\begin{equation}\label{convrate}
\sum_{\substack{2\leq n\leq X\\[1pt]\left[\frac{K/\Q}{p_{\mathrm{min}}(n)}\right]=C}}\frac{\m(n)}{n}=-\frac{\#C}{\#G}+O\left(\mathrm{exp}\left\{-k(\log X)^{1/3}\right\}\right).
\end{equation}
Note that as $X\rightarrow\infty$, the error term $1/\mathrm{exp}\left\{k(\log X)^{1/3}\right\}\rightarrow0$.  Thus we conclude that
\begin{equation*}
\lim_{X\rightarrow\infty}\sum_{\substack{2\leq n\leq X\\[1pt]\left[\frac{K/\Q}{p_{\mathrm{min}}(n)}\right]=C}}\frac{\m(n)}{n}=-\frac{\#C}{\#G}.
\end{equation*}
\end{proof}

\end{document}